\definecolor{shadecolor}{gray}{0.9}
\tikzset{
        ->,  
        node distance=5.5cm, 
        every state/.style={thick, fill=gray!10}, 
        initial text=$ $, 
        }
\theoremstyle{plain}  
\newtheorem{theorem}{Theorem}
\theoremstyle{definition} 
\newtheorem{definition}[theorem]{Definition}
\newtheorem{exmp}[theorem]{Example}
\newtheorem{rem}[theorem]{Remark}
\newtheoremstyle{assumption}
{3pt}
{3pt}
{}
{}
{\bf}
{.}
{.5em}
{\thmname{#1} (\thmnote{#3}\thmnumber{#2})}
\theoremstyle{assumption}
\newtheorem{ass}{Assumption}
\theoremstyle{remark}
\newcommand{\E}{\mathbb{E}}
\newcommand{\conv}{\operatorname{conv}}
\newcommand{\ES}{\operatorname{ES}}
\newcommand{\CoVaR}{\operatorname{CoVaR}}
\newcommand{\F}{\mathcal{F}}
\newcommand{\R}{\mathbb{R}}
\newcommand{\A}{\mathsf{A}}
\renewcommand{\O}{\mathsf{O}}
\newcommand{\one}{\mathds{1}}
\newcommand{\interior}{\operatorname{int}}
\newcommand{\VaR}{\operatorname{VaR}}
\newcommand{\tto}{\twoheadrightarrow}
\renewcommand{\rm}{\normalfont \rmfamily}
\renewcommand{\bf}{\normalfont \bfseries}
\def\be{\begin{equation} \label}
\def\ee{\end{equation}}
\newcommand{\Comments}{1}
\newcommand{\mynote}[2]{\ifnum\Comments=1\textcolor{#1}{#2}\fi}
\newcommand{\mytodo}[2]{\ifnum\Comments=1%
  \todo[linecolor=#1!80!black,backgroundcolor=#1,bordercolor=#1!80!black]{#2}\fi}
\begin{document}

\title{Osband's Principle for Identification Functions}
\author{Timo Dimitriadis\thanks{Heidelberg University, Alfred Weber Institute of Economics, Bergheimer Str.\ 58, 69115 Heidelberg, Germany and
		Heidelberg Institute for Theoretical Studies, 69118 Heidelberg, Germany, e-mail: \href{mailto:timo.dimitriadis@awi.uni-heidelberg.de}{timo.dimitriadis@awi.uni-heidelberg.de}}
	\and Tobias Fissler\thanks{Vienna University of Economics and Business (WU), Department of Finance, Accounting and Statistics, Welthandelsplatz 1, 1020 Vienna, Austria, 
		e-mail: \href{mailto:tobias.fissler@wu.ac.at}{tobias.fissler@wu.ac.at}} \and Johanna Ziegel\thanks{University of Bern, Department of Mathematics and Statistics, Institute of Mathematical Statistics and Actuarial Science, Alpeneggstrasse 22, 3012 Bern, Switzerland, 
		e-mail: \href{mailto:johanna.ziegel@stat.unibe.ch}{johanna.ziegel@stat.unibe.ch}}
}
\maketitle

\begin{abstract}
\noindent
\textbf{Abstract.}
Given a statistical functional of interest such as the mean or median, a (strict) identification function is zero in expectation at (and only at) the true functional value.
Identification functions are key objects in forecast validation, statistical estimation and dynamic modelling.
For a possibly vector-valued functional of interest, we fully characterise the class of (strict) identification functions subject to mild regularity conditions.
\end{abstract}
\noindent
\textit{Keywords:}
Calibration; Characterisation; Identification function; Point forecasts; Z-estimation.
\\
\noindent
\textit{MSC2020 classes:}
62C07; 62F10; 62J20

\section{Introduction and informal statement of main result}
\label{sec:intro}

\onehalfspacing

Consider a statistical functional $T$ of the random variable $Y \sim F$, that is, a 
mapping $F\mapsto T(F)$, such as the mean or the median.
In the theory of forecast validation, a corresponding strict identification function $V(x,y)$ takes the forecast $x$ and the realisation $y$ of $Y$ as arguments and its expectation with respect to $Y \sim F$ is zero if and only if $x$ equals the true functional value $T(F)$.
This defining property makes identification functions a central tool in forecast validation through calibration tests \citep{NoldeZiegel2017}, often referred to as backtests in finance, and to forecast rationality (or optimality) tests in economics \citep{EKT2005, DimiPattonSchmidt2019}.
Furthermore, these functions are fundamental to zero (Z) or generalised method of moments (GMM) estimation \citep{Huber1967, Hansen1982, NeweyMcFadden1994}, where they are often called moment functions or moment conditions.
However, their statistical applications go much beyond these two fields and among others, they influence dynamic modelling through generalised autoregressive score (GAS) models \citep{Creal2013}, isotonic regression estimates \citep{JordanMuehlemannZiegel2019}, or the derivation of  anytime valid sequential tests \citep{casgrain2022anytime}.
A complete understanding of the full class of (strict) identification functions for a given functional is crucial in these applications. 
Our main contribution, Theorem \ref{theorem}, provides such a full characterisation result.

In the jargon of decision theory \citep{Gneiting2011}, 
the quantity of interest $Y$ attains values in an \emph{observation domain} $\O\subseteq \R^d$, which is equipped with the Borel-$\sigma$-algebra.
The class of potential probability distributions $F$ of $Y$ is denoted by $\F$.
Forecasts are elements of an \emph{action domain} $\A\subseteq \R^k$.
Formally, the functional of interest $T$ is a potentially set-valued mapping from $\F$ to $\A$, denoted by $T:\F\tto \A$, where the notation $\tto$ indicates that the values of $T$ are subsets of $\A$, with the convention that we identify point-valued functionals such as the mean with the singleton containing this value.
For $\O=\A=\R$, prime examples for $T$ are the mean or the $\alpha$-quantile $q_\alpha(F) = \{x\in\R \,|\, \lim_{t\uparrow x}F(t)\le \alpha \le F(x)\}$, $\alpha\in(0,1)$, where the latter is interval-valued.
Prime examples for multivariate functionals are the mean-functional in case of multivariate observations ($\O=\A=\R^k$). For univariate observations, examples are multiple quantiles at different levels, the pair (mean, variance) with the natural action domain $\A = \R\times [0,\infty)$ or the pair consisting of the quantile and the Expected Shortfall (ES) at the same level with natural action domain $\A = \{(x_1,x_2)\in\R^2 \,|\, x_1\ge x_2\}$, see Examples \ref{exmp:mean,var} and \ref{exmp:VaR,ES} for details. 
To present the formal definition of an identification function $V:\A\times\O\to\R^k$, let us introduce the convention that $V$ is called \emph{$\F$-integrable} if for each of its components $V_i$ the integral $\int_\O V_i(x,y)\,\mathrm{d}F(y)$ exists and is finite for all $x\in\A$ and $F\in\F$.
Moreover, we shall use 
the shorthand $\bar V(x,F) = \int_\O V(x,y)\,\mathrm{d}F(y)$ for any $x\in\A$, $F\in\F$, where the integral is understood componentwise.

\begin{definition}[Identification function and identifiability]\label{defn:identifiability} 
	$ $ \vspace{-0.2cm}
\begin{enumerate}[label=(\roman*)]
	\item
	An $\F$-integrable map $V: \A\times\O\to\R^k$ is an \emph{$\F$-identification function} for a functional $T: \F\twoheadrightarrow \A\subseteq\R^k$ if 
	for all $x\in\A$ and for all $F\in\F$ 
	\[
	x\in T(F) \implies \bar V(x,F)=0.
	\]
	\item
	An $\F$-integrable map $V: \A\times\O\to\R^k$ is a \emph{strict $\F$-identification function} for a functional $T: \F\twoheadrightarrow \A\subseteq\R^k$ if 
	for all $x\in\A$ and for all $F\in\F$ 
	\[
	x\in T(F) \iff \bar V(x,F)=0.
	\]
	\item
	A functional $T: \F\twoheadrightarrow \A\subseteq\R^k$ is called \emph{$\F$-identifiable} if there exists a strict $\F$-identification function for it.
	\end{enumerate}
\end{definition}
On the class of distributions on $\R$ with a finite mean, $\F^1(\R)$, the mean is identifiable with strict $\F^1(\R)$-identification function $V(x,y)= x-y$. 
Likewise, the $\tau$-expectile, $\tau\in(0,1)$, possesses a strict $\F^1(\R)$-identification function $V(x,y) = 2|\one\{y\le x\} - \tau|(x-y)$. 
On the class $\F_\alpha(\R)$ of distributions on $\R$ such that there exists an $x$ with $F(x) = \alpha$, the $\alpha$-quantile admits the strict $\F_\alpha(\R)$-identification function $V(x,y) = \one\{y\le x\}-\alpha$.
Functionals failing to be identifiable on practically relevant classes of distributions are the variance and Expected Shortfall. 
On such classes $\F$, both of them violate the selective convex level sets property, which is necessary for identifiability \citep{Osband1985, FisslerHlavinovaRudloff2019Theory}.\footnote{$T$ satisfies the selective convex level sets property of $\F$ if for any $F,G\in\F$ and for any $\lambda\in(0,1)$ such that $(1-\lambda)F + \lambda G\in\F$ it holds that $T(F)\cap T(G) \subseteq T((1-\lambda)F + \lambda G)$.}
However, the pairs (mean, variance) and (quantile, ES) turn out to be identifiable with corresponding two-dimensional strict identification functions, see Examples \ref{exmp:mean,var} and \ref{exmp:VaR,ES}. 

Regarding the flexibility of the class of identification functions, the following observation is immediate: If 
$V(x,y)$ is a strict $\F$-identification function for $T:\F \twoheadrightarrow \A\subseteq \R^k$, it can be multiplied with any $\R^{k\times k}$-valued function $h(x)$ of full rank and remains a strict identification function for $T$.
Intriguingly, Theorem \ref{theorem} formally states that, subject to mild regularity conditions, the reverse is also true, and the entire class of strict identification functions is given by 
\begin{equation}
\label{eq:characterisation}
\big\{ h(x) V(x,y)\,|\, h: \A \to\R^{k\times k}, \ \det (h(x))\neq0 \ \text{for all } x \in\A \big\}.
\end{equation}

Besides its theoretical appeal, this characterisation result opens the way for diverse applications.
First, it can be used to optimise power of (conditional) calibration (forecast rationality or optimality) tests studied in \cite{NoldeZiegel2017}.
It is further related to efficient Z- or GMM-estimation based on conditional moment conditions in the sense of \citet{Chamberlain1987} and \cite{Newey1993}, where the matrix $h$ is submerged in the choice of an optimal instrument matrix; see Theorem 3.1 and especially Remark 3.2 in \cite{DFZ2020} for details.
Based on the choice of an identification function (called score by these authors) as their forcing variable, dynamic GAS models of \cite{Creal2013} determine an autoregressive model structure for a corresponding functional of interest that nests classical ARMA and GARCH models for the mean and variance.
In these models, the so-called scaling matrix takes the place of the matrix $h$ and, as already called for by \citet[p.~779]{Creal2013}, this choice ``warrants separate inspection''.

The following examples discuss interesting applications of our characterisation result in \eqref{eq:characterisation} to vector-valued functionals.
\begin{exmp}[Mean and variance]
\label{exmp:mean,var}
The pair (mean, variance) is identifiable on the class $\F^2(\R)$ of distributions with finite variance with the two-dimensional strict $\F^2(\R)$-identification function
\[
V(x_1,x_2,y) = 
\begin{pmatrix}
x_1 - y\\
x_2 - (y-x_1)^2
\end{pmatrix}.
\]
One can use the characterisation result \eqref{eq:characterisation} to produce a multitude of other strict $\F^2(\R)$-identification functions.
Motivated by the decomposition of the variance into the difference of the second moment the squared expectation, a comparably intuitive one is 
\begin{equation}
\label{eq:mean,var}
V'(x_1,x_2,y) = 
\begin{pmatrix}
x_1 - y\\
x_2 + x_1^2 - y^2
\end{pmatrix},
\end{equation}
which arises by choosing the full rank matrix $h(x_1,x_2) = 
\left(
\begin{smallmatrix}
1 &  0 \\
2x_1 & 1
\end{smallmatrix}\right)$.
\end{exmp}

\begin{exmp}[Quantile and ES]
\label{exmp:VaR,ES}
In financial mathematics, Value-at-Risk at level $\alpha\in(0,1)$ ($\VaR_\alpha$) denotes the lower $\alpha$-quantile, $\VaR_\alpha(F) = \inf q_\alpha(F) = \inf\{x \in\R \,|\, \alpha \le F(x)\}$.
Then, the ES at level $\alpha\in(0,1)$ of a distribution $F$ is formally defined as 
\begin{equation}
	\label{eq:Def_ES}
	\ES_\alpha(F) 
	= \frac{1}{\alpha}\int_0^\alpha\VaR_\beta(F)\,\mathrm{d}\beta 
	= 
	\frac{1}{\alpha}\int y\one\{y\le \VaR_\alpha(F)\} \,\mathrm{d}F(y) - \frac{\VaR_\alpha(F)}{\alpha}\big(F(\VaR_{\alpha}(F)) - \alpha\big).
\end{equation}
On any subclass of $\F_\alpha(\R)$ where $\ES_\alpha$ is finite, e.g.~on $\F_\alpha(\R)\cap\F^1(\R)$, there is the following strict identification function for $(q_\alpha,\ES_\alpha$)
\[
V(x_1,x_2,y) = 
\begin{pmatrix}
\one\{y\le x_1\} - \alpha \\
x_2 - \frac{y}{\alpha}\one\{y\le x_1\}
\end{pmatrix},
\]
where the second component naturally corresponds to a truncated expectation. 
Applying \eqref{eq:characterisation} with the full rank matrix $h(x_1,x_2) = 
\left(
\begin{smallmatrix}
1 &  0 \\
x_1/\alpha & 1
\end{smallmatrix}\right)$, one obtains the alternative strict identification function 
\begin{equation}
\label{eq:quantile,ES}
V'(x_1,x_2,y) = 
\begin{pmatrix}
\one\{y\le x_1\} - \alpha \\
x_2 - \frac{y}{\alpha}\one\{y\le x_1\} + \frac{x_1}{\alpha}(\one\{y\le x_1\} - \alpha)
\end{pmatrix}.
\end{equation}
The advantage of $V'$ over $V$ is that when evaluating $V'$ on a discontinuous distribution with $F(\VaR_\alpha(F))>\alpha$, even though the first components of $V$ and $V'$ fail to be an identification function for $q_\alpha$,\footnote{
	To obtain a better understanding of identifiability for the possibly set-valued $\alpha$-quantile and its lower endpoint $\VaR_\alpha$, one can distinguish three cases.
	First, if $F$ is strictly increasing and continuous at its $\alpha$-quantile, the latter is singleton-valued and $V(x,y) = \one\{y\le x\}-\alpha$ is a strict identification function both for $q_\alpha$ and for $\VaR_\alpha$.
	Second, if $F$ is flat at its set-valued $\alpha$-quantile, $V$ is still a strict identification function for the set-valued $q_\alpha$, but it is only a (non-strict) identification function for the singleton-valued $\VaR_\alpha$.
	Third, if $F$ is discontinuous at $\VaR_\alpha(F)$ such that $F(\VaR_\alpha(F))>\alpha$ (that is, if $F\notin \F_\alpha(\R)$), neither $q_\alpha$ nor $\VaR_\alpha$ are identified by $V$.}
the second component of $V'$ still vanishes in expectation when plugging in the correct values for $q_\alpha(F)$ and $\ES_\alpha(F)$ for $x_1$ and $x_2$.
Intuitively, the second component of $V'$ adds a correction term corresponding to the one on the right-hand side of \eqref{eq:Def_ES}.
The choice \eqref{eq:quantile,ES} is already utilised by \citet[Equation (4)]{DimiBayer2019} for Z-estimation of a joint quantile and ES regression model
and naturally shows up in consistent scoring functions for $(q_\alpha, \ES_\alpha)$, see \citet[Corollary 5.5]{FisslerZiegel2016}.
Finally notice that the $\ES_\alpha(F)$ is sometimes also defined as the upper average quantile over $\VaR_\beta$ with $\beta \in (\alpha,1)$. Then, our results apply \textit{mutatis mutandis}.
\end{exmp}

\section{Formal statement of main result} 

The assertion of Theorem \ref{theorem}, and in particular its proof, parallels Osband's principle for consistent scoring functions \citet[Theorem 3.2]{FisslerZiegel2016}, see also \cite{Osband1985, Gneiting2011}.
Up to our knowledge, the assertion has first been stated in the PhD thesis \citet[Proposition 3.2.1]{Fissler2017}. 
We need the following assumptions.

\begin{ass}\label{ass:V1}
Let $\F$ be a convex class of distributions on $\O$ such that for every $x\in \interior (\A) \subseteq \R^k$ there are $F_1,\ldots, F_{k+1}\in\F$ satisfying 
\(
0\in \interior\big(\conv\big(\{ \bar V(x,F_1), \ldots, \bar V(x, F_{k+1})\}\big)\big)\,,
\)
where for any set $B\subseteq \R^k$, $\interior (B)$ denotes the interior of $B$ and $\conv (B)$ denotes the convex hull of $B$.
\end{ass}

\begin{ass}\label{ass:F1}
For every $y\in\R^d$ there exists a sequence $(F_n)_{n\in\mathbb N}$ of distributions $F_n\in\F$ that converges weakly to the Dirac-measure $\delta_y$ and a compact set $K\subset \R^d$ such that the support of $F_n$ is contained in $K$ for all $n$.
\end{ass}

\begin{ass}\label{ass:VS1}
Suppose that for Lebesgue almost all $x\in\interior(\A)$ the maps $V(x,\cdot)$ and $V'(x,\cdot)$ are locally bounded.
Moreover, suppose that the complement of the set 
\[
C:= \{(x,y)\in \interior(\A)\times \O \,|\,V(x,\cdot) \text{ and } V'(x,\cdot) \text{ are continuous at the point $y$}\}
\]
has $(k+d)$-dimensional Lebesgue measure zero.
\end{ass}

Assumptions \eqref{ass:V1}, \eqref{ass:F1}, and \eqref{ass:VS1} basically correspond to Assumptions (V1), (F1), and (VS1) in \cite{FisslerZiegel2016}, respectively. 
Assumption \eqref{ass:V1} ensures that the class $\F$ is sufficiently rich, implying in particular the surjectivity of $T$ onto $\interior(\A)$ and the fact that there are no redundancies in $V$ in the sense that all its components are needed; see Remark \ref{rem:discussion assumption} for some further comments.
Assumptions \eqref{ass:F1} and \eqref{ass:VS1} ensure that $V(x,y)$ can be approximated by a sequence of integrals $\bar V(x,F_n)$.

\begin{theorem}
\label{theorem}
Let $T: \F\twoheadrightarrow \A\subseteq\R^k$ be a functional with a strict $\F$-identification function $V: \A\times \O\to\R^k$. Then the following two assertions hold:
\begin{enumerate}[label=\rm(\roman*)]
\item
If $h\colon \A \to\R^{k\times k}$ is a matrix-valued function with $\det(h(x))\neq0$ for all $x\in\A$, then $V'(x,y)= h(x)V(x,y)$ is also a strict $\F$-identification function for $T$.
\item
Let $V$ satisfy Assumption \eqref{ass:V1} and let 
$V': \A\times \O\to\R^k$ be an $\F$-identification function for $T$. Then there is a matrix-valued function $h: \interior(\A)\to\R^{k\times k}$ such that 
\[
\bar V'(x,F) = h(x)\bar V(x,F) 
\]
for all $x\in\interior(\A)$ and for all $F\in\F$. 

If $V'$ is a strict $\F$-identification function for $T$ and it also satisfies Assumption \eqref{ass:V1}, then additionally $\det(h(x))\neq0$ for all $x\in\interior(\A)$. 
If the integrated identification functions $\bar V(\cdot,F)$ and $\bar V'(\cdot,F)$ are continuous, then also $h$ is continuous, which implies that either $\det(h(x))>0$ for all $x\in\interior(\A)$ or $\det(h(x))<0$ for all $x\in\interior(\A)$. 

Moreover, if $\F$ satisfies Assumption \eqref{ass:F1} and $V$, $V'$ satisfy Assumption \eqref{ass:VS1} it even holds that 
\begin{equation}
\label{eq:Osband identification function ptw}
V'(x,y) = h(x) V(x,y)
\end{equation}
for Lebesgue almost all $(x,y)\in \interior(\A)\times \O$.
\end{enumerate}
\end{theorem}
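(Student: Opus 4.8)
The plan is to mimic Osband's principle as developed for consistent scoring functions in \citet[Theorem 3.2]{FisslerZiegel2016}. Part (i) is immediate: for $h$ with $\det(h(x))\neq 0$ on all of $\A$ we have $\bar V'(x,F)=h(x)\bar V(x,F)$, so $\bar V'(x,F)=0$ if and only if $\bar V(x,F)=0$, which by strictness of $V$ is equivalent to $x\in T(F)$; hence $V'$ is a strict $\F$-identification function for $T$.

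For part (ii) I would fix $x\in\interior(\A)$ and pick $F_1,\dots,F_{k+1}\in\F$ from Assumption \eqref{ass:V1}, setting $v_i:=\bar V(x,F_i)$. Since $0\in\interior(\conv\{v_1,\dots,v_{k+1}\})$, these $k+1$ vectors span $\R^k$ (their convex hull has nonempty interior) and satisfy a relation $\sum_i\lambda_i v_i=0$ with all $\lambda_i>0$, $\sum_i\lambda_i=1$. The crucial step is to construct a well-defined linear map $h(x)\colon\R^k\to\R^k$ with $h(x)v_i=\bar V'(x,F_i)$ and to show $\bar V'(x,F)=h(x)\bar V(x,F)$ for every $F\in\F$. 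The mechanism is that any convex combination $G=\sum_i\mu_i F_i\in\F$ (using convexity of $\F$) with $\bar V(x,G)=\sum_i\mu_i v_i=0$ must, by strictness of $V$, satisfy $x\in T(G)$, hence $\bar V'(x,G)=\sum_i\mu_i\bar V'(x,F_i)=0$; applied to the relation $(\lambda_i)$, and using that the linear relations among $k+1$ vectors spanning $\R^k$ form a one-dimensional space, this makes the definition of $h(x)$ consistent (fix a basis among the $v_i$, define $h(x)$ on it, and the remaining value is forced). For an arbitrary $F\in\F$, interiority of $0$ yields $\eps>0$ with $-\eps\bar V(x,F)=\sum_i\mu_i v_i$, $\mu_i\ge 0$, $\sum_i\mu_i=1$; then $G:=\tfrac1{1+\eps}\big(\eps F+\sum_i\mu_i F_i\big)\in\F$ has $\bar V(x,G)=0$, so $\bar V'(x,G)=0$, i.e.\ $\bar V'(x,F)=-\tfrac1\eps\sum_i\mu_i\bar V'(x,F_i)=h(x)\big(-\tfrac1\eps\sum_i\mu_i v_i\big)=h(x)\bar V(x,F)$. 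This proves the first display of (ii), and since $\{\bar V(x,F):F\in\F\}$ spans $\R^k$ by \eqref{ass:V1}, $h(x)$ is uniquely determined.

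Next I would derive the remaining claims. If $V'$ is strict and satisfies \eqref{ass:V1}, then for each $x$ there are $G_j\in\F$ with $0$ in the interior of $\conv\{\bar V'(x,G_j)\}=\conv\{h(x)\bar V(x,G_j)\}\subseteq\im(h(x))$; a proper subspace contains no open set, forcing $\det(h(x))\neq 0$. For continuity, fixing $x_0$, relabel so that $v_1,\dots,v_k$ form a basis; continuity of $\bar V(\cdot,F_i)$ keeps the matrix $M(x):=[\bar V(x,F_1)\,\cdots\,\bar V(x,F_k)]$ invertible near $x_0$, and uniqueness of $h$ forces $h(x)=[\bar V'(x,F_1)\,\cdots\,\bar V'(x,F_k)]\,M(x)^{-1}$ there, which is continuous; hence $\det h$ is continuous and nonvanishing, so has constant sign on $\interior(\A)$ (on each connected component, if $\interior(\A)$ is disconnected). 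Finally, for the pointwise identity \eqref{eq:Osband identification function ptw} I would invoke Assumptions \eqref{ass:F1} and \eqref{ass:VS1}: for $x$ in the full-measure set where $V(x,\cdot)$ and $V'(x,\cdot)$ are locally bounded and for $y$ with $(x,y)\in C$, take $F_n\in\F$ from \eqref{ass:F1} converging weakly to $\delta_y$ with supports in a fixed compact $K$ (which then contains $y$); local boundedness together with continuity at $y$ and the uniform compact support give $\bar V(x,F_n)\to V(x,y)$ and $\bar V'(x,F_n)\to V'(x,y)$, and passing to the limit in $\bar V'(x,F_n)=h(x)\bar V(x,F_n)$ yields $V'(x,y)=h(x)V(x,y)$; the exceptional sets (the bad $x$'s, of $k$-dimensional measure zero, times $\O$, together with $(\interior(\A)\times\O)\setminus C$) are Lebesgue-null in $\R^{k+d}$.

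I expect the genuine difficulty to lie in the first step of part (ii): correctly exploiting convexity of $\F$ and the interiority in \eqref{ass:V1} to realise both an arbitrary value $\bar V(x,F)$ and the vanishing combinations through admissible mixtures, so that $h(x)$ is forced to be linear and well defined. The invertibility and continuity statements, and the measure-theoretic passage to \eqref{eq:Osband identification function ptw} via the standard weak-convergence lemma underlying Osband's principle, should then be comparatively routine.
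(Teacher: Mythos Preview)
Your proposal is correct and follows essentially the same route as the paper, which likewise invokes the Osband-principle argument of \citet[Theorem 3.2]{FisslerZiegel2016} for the existence of $h$ and then handles invertibility, continuity, and the pointwise identity in turn. The only cosmetic differences are that the paper obtains $\det(h(x))\neq 0$ by exchanging the roles of $V$ and $V'$ (rather than your image-contains-an-open-set argument) and proves $\bar V(x,F_n)\to V(x,y)$ via Skorohod's theorem plus dominated convergence (rather than a direct Portmanteau-type appeal); your parenthetical caveat about connectedness of $\interior(\A)$ is a point the paper leaves implicit.
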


\begin{proof}[Proof of Theorem \ref{theorem}]
Part (i) is a direct consequence of the linearity of the expectation. For (ii), the proof of the existence of $h$ follows along the lines of Theorem 3.2 in \cite{FisslerZiegel2016}. One just needs to replace $\nabla \bar S(x,F)$ with $\bar V'(x,F)$. 
If $V'$ satisfies Assumption \eqref{ass:V1} as well, one directly obtains that $h$ must have full rank on $\interior(\A)$ by exchanging the roles of $V$ and $V'$.
If the expected identification functions are both continuous, the continuity of $h$ follows again exactly like in the proof of Theorem 3.2 in \cite{FisslerZiegel2016}. \\
For the pointwise assertion \eqref{eq:Osband identification function ptw}, 
consider $(x,y)\in \interior(\A)\times \O$ such that both $V(x,\cdot)$ and $V'(x,\cdot)$ are continuous at $y$. (Due to Assumption \eqref{ass:VS1}, this holds for Lebesgue almost all $(x,y)$.)
Let $(F_n)_{n\in\mathbb N}\subseteq \F$ be a sequence as specified in Assumption \eqref{ass:F1}. That is, $(F_n)_{n\in\mathbb N}$ converges weakly to $\delta_y$ and the supports of all $F_n$ are contained in some compact set $K\subset \R^d$. 
We claim that $\bar V(x,F_n)$ and $\bar V'(x,F_n)$ converge to $V(x,y)$ and $V'(x,y)$, respectively, providing the arguments for the former convergence only.
By Skorohod's theorem, there is a sequence of random variables $(\xi_n)_{n\in\mathbb N}$ on some probability space with distributions $F_n$, such that $\xi_n$ converges to $y$ almost surely.
By the continuous mapping theorem, $V(x,\xi_n)$ converges to $V(x,y)$ almost surely. 
Since $V(x,\cdot)$ is assumed to be locally bounded and since $\xi_n\in K$ almost surely, $V(x,\xi_n)$ is bounded almost surely.
Hence, we can apply the dominated convergence theorem to conclude that 
$\bar V(x,F_n) = \E V(x,\xi_n)\to V(x,y)$.
\end{proof}

\begin{rem}
\label{rem:discussion assumption}
For part (i) of Theorem \ref{theorem}, no surjectivity assumption is necessary. In fact, the identification functions at \eqref{eq:mean,var} and \eqref{eq:quantile,ES} are also strict identification functions for (mean, variance) and $(q_\alpha, \ES_\alpha)$, respectively, when considering the action domain $\A=\R^2$.
However, it is obvious that part (ii) of Theorem \ref{theorem} cannot hold without a surjectivity assumption. In fact, $V''(x_1,x_2,y) = V'(x_1,x_2,y) \one\{x_2\ge0\} + \one\{x_2<0\}$ would also be a strict identification function for (mean, variance) on the action domain $\R^2$.

On the other hand, also the richness, in particular, the convexity of $\F$ are needed. Just recall that on the class of symmetric distributions with strictly increasing distribution function, the mean and the median coincide. Hence, both $V(x,y) = x-y$ and $V'(x,y) = \one\{y\le x\} - 1/2$ are strict identification functions, but do not fulfil \eqref{eq:Osband identification function ptw}. 
The reason is that the class of symmetric distributions fails to be convex, unless all distributions have the same mean, in which case the interior of the action domain would be empty under surjectivity.
\end{rem}

\begin{rem}
	One may wonder about the flexibility concerning the dimension of an identification function. Suppose that $V(x,y)$ is a strict $\F$-identification function for some functional $T$, which takes values in $\R^k$.
	Clearly, for any matrix-valued function $h(x)\in \R^{\ell\times k}$ where possibly $\ell\neq k$, the product $V'(x,y) = h(x)V(x,y)$ is an $\F$-identification function for $T$. If $\ell>k$ and the rank of $h(x)$ is $k$ for all $x$, $V'$ is still a strict $\F$-identification function. 
	However, $V'$ will not satisfy Assumption \eqref{ass:V1}, thus containing redundancies (in fact, the easiest way to construct such a $V'$ is by simply copying some components of $V$).
	On the other hand, if $\ell<k$, the proof of Theorem \ref{theorem} (ii) implies that $V'$ cannot be a strict $\F$-identification function.
	
	The latter statement can be exemplified by considering the systemic risk measure $\CoVaR_{\alpha|\beta}$, which, given a two-dimensional observation $(Y_1, Y_2)$, it is defined as the $\VaR_\alpha$ of the conditional distribution of $Y_2$, given that $Y_1$ exceeds its $\VaR_\beta$.
	Then, the pair $(\VaR_{\beta}, \CoVaR_{\alpha|\beta})$ is identifiable on the class of absolutely continuous distributions with positive density on $\R^2$ with a corresponding strict identification function 
	\[
	V(x_1,x_2,y_1,y_2) = \begin{pmatrix}
	\one\{x_1\le y_1\} - \beta\\
	\one\{x_1>y_1\}\big(\one\{x_2\le y_2\} - \alpha\big)
	\end{pmatrix},
	\]
	see \citet[Theorem 4.2]{FisslerHoga2021}. Due to the argument above, the one-dimensional identification function 
	\[
	V'(x_1,x_2,y_1,y_2) = \one\{x_1>y_1\}\one\{x_2>y_2\} - (1-\alpha)(1-\beta)
	\]
	suggested in \cite{Banulescu-RaduETAL2021}
	cannot be a strict identification function for $(\VaR_{\beta}, \CoVaR_{\alpha|\beta})$ on the class of absolutely continuous distributions with positive density, see \citet[Remark 4.3]{FisslerHoga2021}.
\end{rem}

\section*{Acknowledgements}
T.~Dimitriadis gratefully acknowledges support of the German Research Foundation (DFG) through grant number 502572912 and of the Heidelberg Academy of Sciences and Humanities. J.~Ziegel gratefully acknowledges support of the Swiss National Science Foundation.
We are very grateful to Jana Hlavinov\'a for a careful proofreading and valuable feedback on an earlier version of this paper.

\singlespacing
\setlength{\bibsep}{2pt plus 0.3ex}
\def\bibfont{\small}

\bibliographystyle{apalike}
\bibliography{biblio_OsbandId}

\end{document}